\documentclass[11pt]{article}

\usepackage[a4paper,margin=2.5cm]{geometry}
\usepackage[utf8]{inputenc}
\usepackage[T1]{fontenc}
\usepackage{amsmath}
\usepackage{amssymb}
\usepackage{amsfonts}
\usepackage{amsthm}
\usepackage{mathtools}
\usepackage{graphicx}
\usepackage{booktabs}
\usepackage{array}
\usepackage{longtable}
\usepackage{multirow}
\usepackage{hyperref}
\usepackage{enumitem}
\usepackage{setspace}
\usepackage{cite}

\hypersetup{
	colorlinks=true,
	linkcolor=blue,
	citecolor=blue,
	urlcolor=blue
}

\newtheorem{definition}{Definition}[section]
\newtheorem{theorem}[definition]{Theorem}

\newtheorem{proposition}[definition]{Proposition}
\newtheorem{remark}[definition]{Remark}
\newtheorem{corollary}[definition]{Corollary}

\title{\bf Mechanical Implications of the Teleportation of
	Rigid Extended Bodies from the Perspective of Classical
	Mechanics and Mathematics}

\author{
	Roblêdo Mak's Miranda Sette\thanks{%
		Universidade Federal da Grande Dourados (UFGD), Faculdade de
		Ciências Exatas e Tecnologia (FACET), Dourados, MS, Brazil.
		E-mail: \texttt{robledosette@ufgd.edu.br}. ORCID: \texttt{0000-0003-2664-8748}.}
}

\date{\today}

\begin{document}
	
	\maketitle
	
	\begin{abstract}
		We develop a rigorous framework for the kinematics of a material
		point whose trajectory contains a jump discontinuity, modeled as a
		smooth curve on an interval punctured at one instant, with finite
		one-sided limits of position and velocity. A position jump is
		called an instantaneous teleportation. The event is treated as
		exogenous to Newtonian dynamics, imposed only before and after the
		removed instant. Separating the position and velocity
		discontinuities, we prove that the net linear impulse equals the
		momentum jump $\mathbf J=m\,\Delta\mathbf v$, carried entirely by
		the velocity discontinuity, while the position jump contributes
		none. This yields a \emph{momentum-compatibility} condition: an
		instantaneous displacement transfers no net impulse if and only if
		the velocity is preserved. For rigid bodies it splits into
		preservation of translational and angular velocity. For the
		rotating, revolving Earth, a kinematic fact governs the estimates:
		for two points of the same Earth at the same instant the orbital
		velocity cancels, bounding the jump by $2\omega R_{\oplus}\approx
		9.3\times10^{2}\ \mathrm{m\,s^{-1}}$; the larger scale
		$6.0\times10^{4}\ \mathrm{m\,s^{-1}}$ arises only between orbital
		epochs or frames. The internal stress depends on how momentum is
		delivered, and the implied strains lie outside linear elasticity,
		so the figures are indicative.
	\end{abstract}
	
	\noindent
	\textbf{Keywords:}
	Jump discontinuity;
	One-sided limits;
	Classical Mechanics;
	Linear Momentum;
	Impulse;
	Functions of bounded variation;
	Elasticity;
	Continuum Mechanics;
	Stress Analysis;
	Reference Frames.
	
	\vspace{0.6cm}
	
	\begin{center}
		\textbf{\large Resumo}
	\end{center}
	
	\noindent
	Desenvolvemos um arcabouço rigoroso para a cinemática de um ponto
	material cuja trajetória contém uma descontinuidade de salto,
	modelada como curva suave em um intervalo do qual se remove um
	instante, com limites laterais finitos de posição e velocidade. Um
	salto de posição é chamado de teletransporte instantâneo. O evento é
	tratado como externo à dinâmica newtoniana, imposta apenas antes e
	depois do instante removido. Separando as descontinuidades de posição
	e de velocidade, demonstramos que o impulso linear líquido é igual ao
	salto de momento $\mathbf J=m\,\Delta\mathbf v$, carregado
	inteiramente pela descontinuidade de velocidade, enquanto o salto de
	posição não contribui. Disso resulta uma condição de
	\emph{compatibilidade de momento}: um deslocamento instantâneo não
	transfere impulso líquido se, e somente se, a velocidade for
	preservada. Para corpos rígidos, isso se desdobra na preservação das
	velocidades translacional e angular. Para a Terra em rotação e
	revolução, um fato cinemático governa as estimativas: para dois
	pontos da mesma Terra no mesmo instante a velocidade orbital se
	cancela, limitando o salto a $2\omega R_{\oplus}\approx
	9{,}3\times10^{2}\ \mathrm{m\,s^{-1}}$; a escala maior
	$6{,}0\times10^{4}\ \mathrm{m\,s^{-1}}$ surge apenas entre épocas
	orbitais ou referenciais. A tensão interna depende de como o momento
	é entregue, e as deformações implícitas ficam fora da elasticidade
	linear, de modo que os números são indicativos.
	
	\vspace{0.2cm}
	
	\noindent
	\textbf{Palavras-chave:}
	Descontinuidade de salto;
	Limites laterais;
	Mecânica clássica;
	Momento linear;
	Impulso;
	Funções de variação limitada;
	Elasticidade;
	Mecânica do contínuo;
	Análise de tensões;
	Referenciais.
	
	\section{Introduction}
	
	Consider a material point moving along a smooth trajectory in an
	inertial reference frame, and suppose that at a single instant its
	position jumps discontinuously from one point of space to another,
	with no intermediate motion. Such a jump discontinuity of a
	vector-valued function of time is an elementary object of classical
	analysis: the one-sided limits exist and are finite, yet no value is
	assigned at the instant of the jump. The purpose of this paper is to
	study the kinematics of trajectories of this kind and to determine,
	within Newtonian mechanics, the mechanical consequences that such a
	discontinuity necessarily entails.
	
	The motivating physical picture is instantaneous teleportation.
	While teleportation has occupied a prominent place in scientific
	speculation, the only physically established notion currently known
	is quantum teleportation, which transfers quantum information rather
	than matter itself
	\cite{Bennett1993,Bouwmeester1997,NielsenChuang2010}; the purely
	mechanical consequences of an instantaneous relocation of a
	macroscopic body have received little attention. We do not assume any
	mechanism capable of producing such a relocation. Instead, we take
	the discontinuous trajectory as a mathematical primitive and ask what
	classical mechanics forces upon it, independently of any physical
	implementation.
	
	The analytic starting point, made precise in
	Section~\ref{sec:model}, is a curve of class $C^2$ defined on an open
	interval punctured at one instant $t_0$, whose one-sided limits of
	position and velocity exist and are finite. Removing $t_0$ from the
	domain expresses the requirement that the point be absent from space
	at the instant of the jump. A modeling decision made throughout, and
	stated explicitly in Remark~\ref{rem:exogenous}, is that the
	teleportation event is \emph{exogenous} to Newtonian dynamics: the
	equation of motion is imposed only on the two open sub-intervals,
	never through $t_0$. This is not a technical convenience but a
	necessity, because a discontinuous trajectory inserted into Newton's
	law in the sense of distributions produces, from the position jump, a
	term proportional to the derivative of the Dirac distribution---an
	object more singular than an ordinary impulse. By keeping the event
	outside the dynamics we compare only the one-sided states and read
	off the net impulse the mechanism must supply.
	
	The central observation is that two distinct discontinuities must be
	separated: the jump of \emph{position}, which is the teleportation
	itself, and the jump of \emph{velocity}, which need not accompany it.
	Our main result (Theorem~\ref{thm:impulse}) shows that the net linear
	impulse required by the event equals the momentum jump
	$\mathbf J=m\,\Delta\mathbf v$, carried entirely by the velocity
	discontinuity; the position jump alone transfers no net linear
	impulse. This leads to a \emph{momentum-compatibility} condition
	(Corollary~\ref{cor:compat}): an instantaneous displacement is free of
	net impulsive momentum transfer precisely when the velocity vector is
	preserved through it. We are careful (Remark~\ref{rem:not-admissible})
	not to overstate this: momentum compatibility is necessary, but not
	sufficient, for the absence of internal stress.
	
	The remainder of the paper develops the consequences of this
	principle. Section~\ref{sec:background} fixes the mechanical
	framework. Section~\ref{sec:model} introduces the model and proves
	the impulse theorem in a measure-theoretic form. Section~\ref{sec:curve}
	instantiates the trajectory for a point on the rotating and revolving
	Earth, deriving the kinematic bound for two points of the same Earth
	at the same instant and distinguishing it from the inter-epoch and
	inter-frame cases. Section~\ref{sec:extended} generalizes the theory
	to extended rigid bodies, carrying the orientation explicitly, and
	separates net impulse, body force and internal stress.
	Section~\ref{sec:deformation} recalls the elements of linear
	elasticity, translates the induced impulse into an order-of-magnitude
	stress, and states plainly the range of validity of the estimate.
	Section~\ref{sec:conclusion} concludes.
	
	\section{Physical Background}
	\label{sec:background}
	
	The purpose of this section is to establish the mechanical framework
	used throughout the paper. Since the analysis concerns the
	instantaneous relocation of macroscopic bodies, only classical
	mechanics and continuum mechanics are employed. No assumptions
	concerning quantum gravity, relativistic field theory or microscopic
	teleportation mechanisms are required \cite{Goldstein2002,LandauLifshitz1976}.
	The objective is to derive necessary mechanical conditions that any
	hypothetical instantaneous teleportation process must satisfy,
	independently of its physical implementation.
	
	\subsection{Classical Mechanics}
	
	Throughout this paper we assume that every material point moves in a
	reference frame
	\[
	\mathcal I=(O,\mathbf e_1,\mathbf e_2,\mathbf e_3)
	\]
	whose origin $O$ is placed at the Solar System barycenter and which is
	taken to be inertial to the accuracy required here. Away from the
	teleportation event, the trajectory of a material point is represented
	by a twice continuously differentiable curve
	\[
	\gamma:I\subset\mathbb R\longrightarrow\mathbb R^3,
	\qquad t\mapsto\gamma(t),
	\]
	where $\gamma(t)$ denotes its spatial position at the instant $t$. The
	instantaneous velocity is the tangent vector
	$\mathbf v(t)=\dot{\gamma}(t)$, and the acceleration is
	$\mathbf a(t)=\ddot{\gamma}(t)$. The assumption that the trajectory is
	of class $C^2$ reflects the fact that ordinary mechanical motion does
	not exhibit discontinuous changes of velocity.
	
	\subsection{Linear Momentum}
	
	For a particle of constant mass $m$, the linear momentum is
	$\mathbf p(t)=m\,\mathbf v(t)$. Whenever the mass remains constant,
	\[
	\frac{d\mathbf p}{dt}=m\,\mathbf a,
	\]
	and Newton's Second Law may be written $\mathbf F=d\mathbf p/dt$, which
	expresses the fact that every net external force produces a variation
	of linear momentum. This formulation is valid in any inertial frame.
	
	\subsection{Impulse}
	
	If a force acts during the finite interval $[t_1,t_2]$, the mechanical
	impulse is
	\[
	\mathbf J=\int_{t_1}^{t_2}\mathbf F(t)\,dt.
	\]
	Combining this with Newton's Second Law yields the impulse--momentum
	theorem,
	\[
	\mathbf J=\Delta\mathbf p=\mathbf p(t_2)-\mathbf p(t_1),
	\]
	so every variation of momentum corresponds to a nonzero impulse.
	
	\subsection{Impulsive Interactions}
	\label{subsec:impulsive}
	
	Ordinary mechanical systems evolve continuously in time, so both
	position and velocity remain continuous whenever the acting forces are
	finite. Suppose, on the contrary, that the velocity undergoes a
	discontinuous change $\Delta\mathbf v=\mathbf v_2-\mathbf v_1\neq
	\mathbf 0$ across a vanishingly small interval $\Delta t$. The
	corresponding momentum change is $\Delta\mathbf p=m\,\Delta\mathbf v$,
	and the average force required to produce it is
	\[
	\mathbf F_{\mathrm{avg}}=\frac{\Delta\mathbf p}{\Delta t}
	=\frac{m\,\Delta\mathbf v}{\Delta t}.
	\]
	In the idealized limit $\Delta t\to 0$ with $\Delta\mathbf v$ fixed and
	nonzero, the magnitude $\|\mathbf F_{\mathrm{avg}}\|$ diverges (the
	direction being fixed), so the average force is unbounded while its
	time integral remains equal to the finite impulse
	$\mathbf J=m\,\Delta\mathbf v$. This is the standard idealization of an
	impulsive force, made precise below by treating the momentum as a
	function of bounded variation whose distributional derivative carries
	an atom at the instant of the jump.
	
	\section{A Mathematical Model for Teleportation}
	\label{sec:model}
	
	The starting point is the trajectory of a material point $P$ of mass
	$m$. Away from any teleportation event the motion is smooth, so $P$
	follows a curve of class $C^2$. We now make precise what it means for
	such a trajectory to contain a teleportation event. The object
	carrying a possible jump is taken as primitive, and the ordinary
	continuous trajectory is recovered as the special case in which no jump
	occurs.
	
	\begin{definition}[Trajectory with a teleportation instant]
		\label{def:traj}
		Let $I\subset\mathbb R$ be a nonempty open interval and let
		$t_0\in I$. A \emph{trajectory with teleportation instant $t_0$} is
		a map
		\[
		\gamma_P:I\setminus\{t_0\}\longrightarrow\mathbb R^3
		\]
		of class $C^2$ on $I\setminus\{t_0\}$ such that the one-sided limits
		\[
		P_{-}:=\lim_{t\to t_0^{-}}\gamma_P(t),
		\qquad
		P_{+}:=\lim_{t\to t_0^{+}}\gamma_P(t)
		\]
		exist and are finite, and such that the one-sided velocity limits
		\[
		\mathbf v_{-}:=\lim_{t\to t_0^{-}}\dot\gamma_P(t),
		\qquad
		\mathbf v_{+}:=\lim_{t\to t_0^{+}}\dot\gamma_P(t)
		\]
		exist and are finite.
	\end{definition}
	
	The instant $t_0$ is deliberately excluded from the domain: the point
	$P$ has no position at $t_0$.
	
	\begin{definition}[Teleportation]
		\label{def:teleport}
		The particle $P$ \emph{teleports} at $t_0$ if $P_{+}\neq P_{-}$. The
		vector $\boldsymbol\Delta:=P_{+}-P_{-}\in\mathbb R^3\setminus
		\{\mathbf 0\}$ is the \emph{teleportation displacement}, and
		$P_{-}$, $P_{+}$ are the \emph{departure} and \emph{arrival}
		points.
	\end{definition}
	
	\begin{remark}[Recovery of ordinary motion]
		If $P_{+}=P_{-}$ and $\mathbf v_{+}=\mathbf v_{-}$, then $\gamma_P$
		extends to a $C^{1}$ curve on all of $I$ by setting
		$\gamma_P(t_0):=P_{-}$; no teleportation occurs and ordinary
		continuous motion is recovered. Thus Definition~\ref{def:traj}
		contains ordinary motion as the special case
		$\boldsymbol\Delta=\mathbf 0$, $\Delta\mathbf v=\mathbf 0$: the
		trajectory with a possible jump is primitive, and continuity is a
		property that may or may not hold. On its domain $I\setminus\{t_0\}$
		the map $\gamma_P$ is everywhere differentiable; the question of
		differentiability \emph{at} $t_0$ does not arise, since
		$t_0\notin\operatorname{dom}\gamma_P$.
	\end{remark}
	
	\begin{remark}[The event is exogenous to the dynamics]
		\label{rem:exogenous}
		Throughout, Newton's law is imposed only on $I\setminus\{t_0\}$,
		and the event at $t_0$ is treated as an operation external to
		Newtonian dynamics. We do \emph{not} embed $\gamma_P$ as a
		distribution on all of $I$ and differentiate through $t_0$. This is
		deliberate. Were the discontinuous trajectory inserted into
		$m\,D^{2}\mathbf x=\mathbf F$ in the sense of distributions, with a
		position jump $\boldsymbol\Delta$ and a velocity jump
		$\Delta\mathbf v$ at $t_0$, one would obtain
		\[
		m\,D^{2}\mathbf x
		=m\,\mathbf a_{\mathrm{reg}}
		+m\,\Delta\mathbf v\,\delta_{t_0}
		+m\,\boldsymbol\Delta\,\delta'_{t_0},
		\]
		where $\delta_{t_0}$ is the Dirac distribution at $t_0$ and
		$\delta'_{t_0}$ its derivative. The velocity jump contributes an
		ordinary impulse $m\,\Delta\mathbf v\,\delta_{t_0}$; the position
		jump contributes the strictly more singular term
		$m\,\boldsymbol\Delta\,\delta'_{t_0}$, which nevertheless delivers
		\emph{zero} net linear impulse, since $\int_{\mathbb R}\delta'_{t_0}
		=0$. Removing $t_0$ from the domain does not by itself dispose of
		this term, because the associated distribution depends only on the
		$L^{1}_{\mathrm{loc}}$ class of $\gamma_P$. We therefore do not
		claim that the position jump is dynamically neutral within a
		distributional reading; we simply place the event outside the
		dynamics and compare the one-sided states, reading off the net
		impulse the mechanism must supply as the momentum jump
		$\mathbf p_{+}-\mathbf p_{-}$.
	\end{remark}
	
	It is essential to distinguish the two discontinuities. The
	\emph{position} jump $\boldsymbol\Delta=P_{+}-P_{-}$ is the
	teleportation itself; the \emph{velocity} jump
	$\Delta\mathbf v:=\mathbf v_{+}-\mathbf v_{-}$ may or may not vanish,
	and, as we now show, it is the velocity jump that fixes the net
	impulse.
	
	\begin{theorem}[Net impulse of the event]
		\label{thm:impulse}
		Let $P$ be a particle of constant mass $m>0$ following a trajectory
		with teleportation instant $t_0$ in the sense of
		Definition~\ref{def:traj}. Extend the momentum
		$\mathbf p=m\,\dot\gamma_P$ to a function of bounded variation on
		$I$ by assigning it its one-sided limits $\mathbf p_{\pm}=m\,
		\mathbf v_{\pm}$, and let $\mathbf F:=D\mathbf p$ be its
		distributional derivative, a finite vector-valued measure on
		compact subintervals. Then the atom of $\mathbf F$ at $t_0$ is
		\[
		\mathbf F(\{t_0\})=\mathbf p_{+}-\mathbf p_{-}=m\,\Delta\mathbf v,
		\]
		so the net linear impulse concentrated at the event is
		$\mathbf J=m\,\Delta\mathbf v$. In particular the event transfers a
		nonzero net linear impulse if and only if $\Delta\mathbf v\neq
		\mathbf 0$.
	\end{theorem}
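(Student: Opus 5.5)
The plan is to compute the distributional derivative of the extended momentum directly by testing against smooth compactly supported functions, and to read off its atom at $t_0$. One hypothesis needs care first. Definition~\ref{def:traj} gives only $C^2$ regularity on the punctured interval together with finite one-sided limits. That does not by itself make $\mathbf p=m\dot\gamma_P$ of bounded variation near $t_0$. I will therefore work on a compact subinterval $[a,b]\subset I$ containing $t_0$ in its interior. On each side of $t_0$ the restriction of $\mathbf p$ extends continuously to the closed pieces $[a,t_0]$ and $[t_0,b]$, using $\mathbf p_\pm$. I will read the statement's phrase ``extend the momentum to a function of bounded variation'' as the standing assumption that $\ddot\gamma_P\in L^1$ near $t_0$, equivalently that $\mathbf p$ has finite total variation on each closed piece. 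This is the one point where I expect to have to be explicit, since it is not automatic from the definition.

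Next, fix $\varphi\in C_c^\infty((a,b))$ and write $\langle D\mathbf p,\varphi\rangle=-\int_a^b \mathbf p\,\varphi'\,dt$. I split the integral at $t_0$. On $(a,t_0-\varepsilon)$ and $(t_0+\varepsilon,b)$, where $\mathbf p$ is $C^1$, I integrate by parts. This gives $\int m\ddot\gamma_P\varphi\,dt$ plus the boundary terms $-\mathbf p(t_0-\varepsilon)\varphi(t_0-\varepsilon)+\mathbf p(t_0+\varepsilon)\varphi(t_0+\varepsilon)$; the terms at $a$ and $b$ vanish because $\varphi$ has compact support. I then let $\varepsilon\to0$. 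The remaining integrals converge by the $L^1$ assumption, or by dominated convergence, since $\mathbf p\varphi'$ is bounded. The boundary terms converge to $(\mathbf p_+-\mathbf p_-)\varphi(t_0)$ by the existence of the one-sided limits. Hence
\[
D\mathbf p = m\,\ddot\gamma_P\,\mathcal L^1\big|_{I\setminus\{t_0\}} + (\mathbf p_+-\mathbf p_-)\,\delta_{t_0}
\]
on $(a,b)$, and this is a finite vector measure there.

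The absolutely continuous part gives zero mass to the singleton $\{t_0\}$. It follows that $\mathbf F(\{t_0\})=\mathbf p_+-\mathbf p_-=m(\mathbf v_+-\mathbf v_-)=m\,\Delta\mathbf v$. As a consistency check, I will note that the same value comes from $\mathbf F(\{t_0\})=\lim_{\varepsilon\to0}\mathbf F((t_0-\varepsilon,t_0+\varepsilon))$, together with the elementary fact that for a BV function the measure of a shrinking interval equals the difference of its one-sided limits. The ``if and only if'' clause is then immediate from $m>0$: $m\,\Delta\mathbf v\neq\mathbf 0$ exactly when $\Delta\mathbf v\neq\mathbf 0$.

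Finally, I will remark that the position jump $\boldsymbol\Delta$ never enters the computation, because only $\dot\gamma_P$ is differentiated. This is consistent with Remark~\ref{rem:exogenous}: the term $m\,\boldsymbol\Delta\,\delta'_{t_0}$ simply does not arise when the event is kept outside the dynamics.
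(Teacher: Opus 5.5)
Your argument is correct, but it takes a different and more self-contained route than the paper.

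\textbf{The paper's route.} The paper asserts that $\mathbf p$ is of bounded variation on compact subintervals containing $t_0$, on the grounds that it is $C^1$ on each side and has a single jump. It then invokes the structure theorem for derivatives of $BV$ functions (absolutely continuous, jump and Cantor parts), declares the Cantor part zero, and reads the atom off as the jump. It adds a regularization argument: a smoothed force, integrated over $(t_0-\varepsilon,t_0+\varepsilon)$, tends to $\mathbf p_+-\mathbf p_-$.

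\textbf{Your route.} You compute $D\mathbf p$ directly by integrating by parts on $(a,t_0-\varepsilon)$ and $(t_0+\varepsilon,b)$. This yields the whole measure $m\,\ddot\gamma_P\,\mathcal L^1+(\mathbf p_+-\mathbf p_-)\,\delta_{t_0}$ explicitly, not just its atom. As a result, the vanishing of the Cantor part and the absence of any contribution from $\boldsymbol\Delta$ are visible rather than asserted. Your shrinking-interval check plays the role of the paper's regularization.

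\textbf{The hypothesis.} Your caution here is well founded, because the paper's inference is not valid: $C^1$ on each side plus finite one-sided limits does not give bounded variation. For example, $\dot\gamma_P(t)=(t-t_0)\sin\bigl(1/(t-t_0)\bigr)\,\mathbf e_1$ for $t>t_0$ is compatible with Definition~\ref{def:traj}, with $\mathbf v_+=\mathbf 0$. Yet it has infinite variation near $t_0$. In that case $D\mathbf p$ is not a measure, and there is no atom to speak of. Making $\ddot\gamma_P\in L^1$ near $t_0$ an explicit standing assumption is exactly what the phrase ``extend \dots\ to a function of bounded variation'' must mean for the theorem to hold.

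\textbf{Comparison.} The paper's approach gains brevity by citing the general $BV$ decomposition, but it rests on an unproved (and in general false) regularity claim. Yours gains an elementary, checkable computation with the needed hypothesis isolated.
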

	
	\begin{proof}
		By hypothesis the one-sided velocity limits exist and are finite, so
		$\mathbf p$ is bounded and of bounded variation on any compact
		subinterval of $I$ containing $t_0$: it is $C^{1}$ on each side of
		$t_0$ and has a single jump there. The distributional derivative of
		a $BV$ function decomposes into an absolutely continuous part, a
		jump (atomic) part and a Cantor part; here the Cantor part vanishes
		and the only atom sits at $t_0$, with mass equal to the jump
		$\mathbf p_{+}-\mathbf p_{-}=m(\mathbf v_{+}-\mathbf v_{-})
		=m\,\Delta\mathbf v$, which is nonzero precisely when
		$\Delta\mathbf v\neq\mathbf 0$ because $m>0$. Equivalently, for the
		regularized transitions $\mathbf F_\varepsilon$ obtained by
		smoothing $\mathbf p$ across $(t_0-\varepsilon,t_0+\varepsilon)$,
		\[
		\int_{t_0-\varepsilon}^{t_0+\varepsilon}\mathbf F_\varepsilon(t)\,dt
		=\mathbf p(t_0+\varepsilon)-\mathbf p(t_0-\varepsilon)
		\xrightarrow[\varepsilon\to0^{+}]{}
		\mathbf p_{+}-\mathbf p_{-}=m\,\Delta\mathbf v,
		\]
		so the impulse concentrated at $t_0$ is $m\,\Delta\mathbf v$.
	\end{proof}
	
	\begin{definition}[Momentum-compatible teleportation]
		\label{def:admissible}
		A teleportation event at $t_0$ is \emph{momentum-compatible} if the
		velocity vectors immediately before and after the event coincide,
		\[
		\Delta\mathbf v=\mathbf v_{+}-\mathbf v_{-}=\mathbf 0.
		\]
	\end{definition}
	
	\begin{corollary}[Momentum-compatibility condition]
		\label{cor:compat}
		A teleportation event transfers no net impulsive variation of linear
		momentum if and only if it is momentum-compatible. Equivalently, an
		instantaneous displacement $\boldsymbol\Delta\neq\mathbf 0$ is free
		of net impulsive momentum transfer precisely when
		$\mathbf v_{+}=\mathbf v_{-}$.
	\end{corollary}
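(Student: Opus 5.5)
The corollary follows from Theorem~\ref{thm:impulse} combined with Definition~\ref{def:admissible}, so the plan is a short chain of equivalences. First I fix what ``transfers no net impulsive variation of linear momentum'' means. Following Theorem~\ref{thm:impulse}, it means that the atom of the measure $\mathbf F=D\mathbf p$ at $t_0$ vanishes, that is, $\mathbf J=\mathbf F(\{t_0\})=\mathbf 0$. Equivalently, the limit $\mathbf p(t_0+\varepsilon)-\mathbf p(t_0-\varepsilon)\to\mathbf p_{+}-\mathbf p_{-}$ as $\varepsilon\to0^{+}$ vanishes. I will state this reading explicitly at the start. This matters because the absolutely continuous part of $\mathbf F$, which comes from the regular Newtonian force on $I\setminus\{t_0\}$, contributes nothing to the atom and must not be confused with the event.

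Next I invoke Theorem~\ref{thm:impulse}, which gives $\mathbf J=m\,\Delta\mathbf v$. Since $m>0$, scalar multiplication by $m$ is injective on $\mathbb R^3$. Hence $\mathbf J=\mathbf 0$ holds if and only if $\Delta\mathbf v=\mathbf 0$, which is exactly momentum-compatibility in the sense of Definition~\ref{def:admissible}. This proves the first sentence of the corollary.

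For the ``equivalently'' clause, I note that the displacement $\boldsymbol\Delta$ does not enter the formula for $\mathbf J$ at all. Indeed, the BV extension in Theorem~\ref{thm:impulse} is built from $\mathbf p=m\dot\gamma_P$, whose one-sided limits involve only $\mathbf v_\pm$. So for any teleportation with $\boldsymbol\Delta\neq\mathbf 0$ (Definition~\ref{def:teleport}), the same equivalence holds: the event is free of net impulsive transfer precisely when $\mathbf v_{+}=\mathbf v_{-}$. I will also remark, consistently with Remark~\ref{rem:exogenous}, that the position jump is not claimed to be dynamically neutral. It only contributes zero net linear impulse, since $\int\delta'_{t_0}=0$.

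I expect no substantive obstacle. The only delicate point is terminological: ``net impulsive transfer'' must be pinned to the atom at $t_0$ and not to the total variation of $\mathbf p$ over a finite interval. With that convention fixed at the outset, the proof reduces to the injectivity of multiplication by $m>0$.
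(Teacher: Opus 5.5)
Your proposal is correct and matches the paper's proof, which reads the corollary off Theorem~\ref{thm:impulse}: $\mathbf J=m\,\Delta\mathbf v$ with $m>0$ vanishes exactly when $\Delta\mathbf v=\mathbf 0$. Your extra steps are consistent with Remark~\ref{rem:exogenous} and add only precision the paper leaves implicit: identifying ``net impulsive transfer'' with the atom at $t_0$, and observing that $\boldsymbol\Delta$ never enters $\mathbf J$.
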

	
	\begin{proof}
		Immediate from Theorem~\ref{thm:impulse}: the net impulse
		$\mathbf J=m\,\Delta\mathbf v$ vanishes if and only if
		$\Delta\mathbf v=\mathbf 0$.
	\end{proof}
	
	\begin{remark}[What compatibility does and does not guarantee]
		\label{rem:not-admissible}
		Corollary~\ref{cor:compat} concerns the \emph{net} linear impulse
		only. Momentum compatibility does not by itself exclude impulsive
		forces: internal impulses of opposite sign, or impulses applied to
		different regions of an extended body, may cancel in the total while
		still deforming the body. For this reason we avoid the term
		``mechanically admissible''. Full mechanical admissibility requires
		additional conditions on the spatial distribution of the forces, on
		the angular momentum, and on the resulting internal stresses; these
		are addressed in Sections~\ref{sec:extended} and~\ref{sec:deformation}.
	\end{remark}
	
	\begin{remark}[Energy is not the obstruction]
		\label{rem:energy}
		The worst case for the impulse is velocity reversal,
		$\mathbf v_{+}=-\mathbf v_{-}$ with equal speeds. There the kinetic
		energy is \emph{conserved},
		\[
		\Delta E_{\mathrm k}
		=\tfrac12 m\bigl(\|\mathbf v_{+}\|^{2}-\|\mathbf v_{-}\|^{2}\bigr)=0,
		\]
		yet the momentum jump is maximal,
		$\|\Delta\mathbf p\|=2m\|\mathbf v_{-}\|$. The mechanical violence of
		the event is therefore governed by the discontinuity of
		\emph{momentum}, not by any energy imbalance: the body's material
		responds to $\Delta\mathbf v$, which may be large even when no net
		energy is exchanged.
	\end{remark}
	
	The principal question of this work may now be stated precisely.
	
	\begin{quote}
		\emph{Under which kinematic conditions can an instantaneous
			teleportation occur without producing impulsive forces capable of
			damaging the transported body?}
	\end{quote}
	
	When the trajectory is fixed by astronomical constraints, the velocity
	vectors $\mathbf v_{-}$ and $\mathbf v_{+}$ are not free parameters.
	The next section computes $\Delta\mathbf v$ for such trajectories. We
	assume throughout that the mass is conserved during teleportation.
	
	\section{Modeling the Trajectory $\gamma_P$}
	\label{sec:curve}
	
	We build an explicit model of a point fixed to the Earth's surface,
	retaining the two dominant motions: the daily axial rotation and the
	annual orbital revolution. Higher-order contributions (precession,
	nutation, the lunar wobble, and the motion of the Solar System through
	the Galaxy) are smooth and may be added as refinements.
	
	\subsection{Kinematic decomposition}
	
	Work in the Solar-System-barycentric frame $\mathcal I$. Represent the
	orbital position of the Earth's center as $\mathbf R(t)$ and the
	position of $P$ relative to that center as $\mathbf r(t)$, so that
	\begin{equation}
		\gamma_P(t)=\mathbf R(t)+\mathbf r(t).
		\label{eq:decomp}
	\end{equation}
	Modeling the orbit as circular about the origin (the eccentricity
	$e\approx 0.0167$, and the offset of the barycenter from the Sun, are
	neglected at leading order and constitute additional approximations),
	\begin{equation}
		\mathbf R(t)=
		R_{\mathrm o}\bigl(\cos\Omega t\,\mathbf e_1
		+\sin\Omega t\,\mathbf e_2\bigr),
		\label{eq:orbit}
	\end{equation}
	where $R_{\mathrm o}$ is one astronomical unit and $\Omega$ the orbital
	angular frequency. For a point on the equator, neglecting the
	$23.44^\circ$ obliquity at leading order,
	\begin{equation}
		\mathbf r(t)=
		R_{\oplus}\bigl(\cos(\omega t+\varphi_0)\,\mathbf e_1
		+\sin(\omega t+\varphi_0)\,\mathbf e_2\bigr),
		\label{eq:spin}
	\end{equation}
	with $\omega$ the sidereal rotational frequency and $\varphi_0$ an
	initial phase. Differentiating,
	\begin{equation}
		\dot\gamma_P(t)=
		\underbrace{R_{\mathrm o}\Omega\bigl(-\sin\Omega t\,\mathbf e_1
			+\cos\Omega t\,\mathbf e_2\bigr)}_{\mathbf V_{\mathrm o}(t)}
		+\underbrace{R_{\oplus}\omega\bigl(-\sin(\omega t+\varphi_0)\,
			\mathbf e_1+\cos(\omega t+\varphi_0)\,\mathbf e_2\bigr)}_{
			\mathbf V_{\mathrm r}(t)},
		\label{eq:vel}
	\end{equation}
	with characteristic speeds $v_{\mathrm o}=R_{\mathrm o}\Omega$ and
	$v_{\mathrm r}=R_{\oplus}\omega$.
	
	\subsection{The velocity jump requires two distinct configurations}
	
	Equation~\eqref{eq:vel} defines a single $C^{\infty}$ function, for
	which $\dot\gamma_P(t_0^{-})=\dot\gamma_P(t_0^{+})=\dot\gamma_P(t_0)$
	and hence $\Delta\mathbf v=\mathbf 0$. A genuine teleportation is
	\emph{not} a one-sided limit of one smooth curve at one instant; it is
	a jump between two distinct configurations. Accordingly we model the
	trajectory by two branches sharing the instant $t_0$,
	\begin{equation}
		\gamma_{-}(t)=\mathbf R_{-}(t)+\mathbf r_{-}(t)\ (t<t_0),
		\qquad
		\gamma_{+}(t)=\mathbf R_{+}(t)+\mathbf r_{+}(t)\ (t>t_0),
		\label{eq:branches}
	\end{equation}
	whose orbital and rotational phases differ. The mechanically relevant
	quantity is
	\begin{equation}
		\Delta\mathbf v=\dot\gamma_{+}(t_0)-\dot\gamma_{-}(t_0)
		=\bigl(\mathbf V_{\mathrm o}^{+}-\mathbf V_{\mathrm o}^{-}\bigr)
		+\bigl(\mathbf V_{\mathrm r}^{+}-\mathbf V_{\mathrm r}^{-}\bigr).
		\label{eq:dv}
	\end{equation}
	
	\subsection{Two points of the same Earth at the same instant: the
		orbital velocity cancels}
	
	The magnitude of $\Delta\mathbf v$ depends decisively on the scenario.
	Consider first a teleportation between two points of the \emph{same}
	Earth at the \emph{same} instant $t_0$. Writing the velocity of a
	surface point as
	\begin{equation}
		\mathbf v=\mathbf V_{E}(t_0)+\boldsymbol\omega\times\mathbf r,
		\label{eq:surfacevel}
	\end{equation}
	where $\mathbf V_{E}(t_0)$ is the velocity of the Earth's center
	(orbital motion included) and $\mathbf r$ is the position relative to
	that center, the departure and arrival velocities share the common term
	$\mathbf V_{E}(t_0)$. Subtracting,
	\begin{equation}
		\boxed{\;\Delta\mathbf v=\boldsymbol\omega\times
			(\mathbf r_{+}-\mathbf r_{-})\;}
		\label{eq:cancel}
	\end{equation}
	so the orbital contribution cancels identically. Since
	$\|\mathbf r_{+}-\mathbf r_{-}\|\le 2R_{\oplus}$,
	\begin{equation}
		\|\Delta\mathbf v\|\le 2\,\omega R_{\oplus}
		=2\,v_{\mathrm r}\approx 9.30\times10^{2}\ \mathrm{m\,s^{-1}}.
		\label{eq:samebound}
	\end{equation}
	This is the correct kinematic bound for same-Earth, same-instant
	teleportation: only the rotational mismatch survives.
	
	\subsection{Inter-epoch and inter-frame teleportation}
	
	The much larger orbital scale enters only when departure and arrival do
	\emph{not} share the common velocity $\mathbf V_{E}(t_0)$. This happens
	in two ways: (i) the arrival occurs at a different orbital epoch (for a
	near-reversal of the orbital velocity, separated by roughly half an
	orbital period), or (ii) the arrival lies in a frame in relative
	translational motion (a different astronomical body). In these cases the
	arrival velocity is fixed by the destination state rather than by the
	Earth's instantaneous trajectory, and, in the worst case of antiparallel
	departure and arrival velocities of maximal magnitude,
	\begin{equation}
		\|\Delta\mathbf v\|\le 2\,(v_{\mathrm o}+v_{\mathrm r})
		\approx 6.05\times10^{4}\ \mathrm{m\,s^{-1}},
		\label{eq:interbound}
	\end{equation}
	an \emph{upper} bound, attained only when both one-sided speeds equal
	$v_{\mathrm o}+v_{\mathrm r}$ and point in opposite directions.
	
	\subsection{Numerical parameters}
	
	Table~\ref{tab:constants} collects the constants used. The rotational
	rate is referred to the sidereal day and the orbital speed is the mean
	value over the slightly eccentric orbit.
	
	\begin{table}[htbp]
		\centering
		\begin{tabular}{@{}llll@{}}
			\toprule
			Symbol & Quantity & Value & Source \\
			\midrule
			$R_{\oplus}$ & Earth equatorial radius
			& $6.378137\times10^{6}\ \mathrm{m}$
			& \cite{WGS84,NASAEarthFactSheet} \\
			$\omega$ & Sidereal rotation rate
			& $7.2921159\times10^{-5}\ \mathrm{rad\,s^{-1}}$
			& \cite{IERS2010,NASAEarthFactSheet} \\
			$v_{\mathrm r}$ & Equatorial rotation speed
			& $4.651\times10^{2}\ \mathrm{m\,s^{-1}}$
			& \cite{NASAEarthFactSheet} \\
			$R_{\mathrm o}$ & Orbital radius (1 AU)
			& $1.495979\times10^{11}\ \mathrm{m}$
			& \cite{IAU2012,NASAEarthFactSheet} \\
			$\Omega$ & Orbital angular rate
			& $1.99099\times10^{-7}\ \mathrm{rad\,s^{-1}}$
			& \cite{NASAEarthFactSheet} \\
			$v_{\mathrm o}$ & Mean orbital speed
			& $2.9780\times10^{4}\ \mathrm{m\,s^{-1}}$
			& \cite{NASAEarthFactSheet} \\
			$e$ & Orbital eccentricity
			& $1.67\times10^{-2}$
			& \cite{NASAEarthFactSheet} \\
			\bottomrule
		\end{tabular}
		\caption{Astronomical constants used to model the trajectory
			$\gamma_P$.}
		\label{tab:constants}
	\end{table}
	
	With these values the same-Earth bound~\eqref{eq:samebound} is
	$\|\Delta\mathbf v\|\le 9.30\times10^{2}\ \mathrm{m\,s^{-1}}$, whereas
	the inter-frame bound~\eqref{eq:interbound} is
	$\|\Delta\mathbf v\|\le 6.05\times10^{4}\ \mathrm{m\,s^{-1}}$. These two
	figures set the scale against which material strength is compared in
	Section~\ref{sec:deformation}. The corresponding maximal impulse per
	unit mass is $\|\mathbf J\|/m=\|\Delta\mathbf v\|$.
	
	\section{Teleportation of an Extended Body}
	\label{sec:extended}
	
	We generalize to an extended rigid body, viewed as a continuum of
	material points. Let $\mathcal B\subset\mathbb R^3$ be the reference
	configuration, with $X_0$ a fixed reference point, and let each material
	point $X\in\mathcal B$ carry a trajectory in the sense of
	Definition~\ref{def:traj}, all sharing the instant $t_0$.
	
	\subsection{Rigid velocity field, orientation included}
	
	The configuration of a rigid body just before and just after the event
	is
	\begin{equation}
		\mathbf y_{\pm}(X)=\mathbf c_{\pm}+Q_{\pm}\,(X-X_0),
		\qquad Q_{\pm}\in SO(3),
		\label{eq:rigidconfig}
	\end{equation}
	where $\mathbf c_{\pm}$ are the center-of-mass positions and $Q_{\pm}$
	the orientations. The one-sided velocity fields are then
	\begin{equation}
		\mathbf v_{\pm}(X)=\mathbf V_{\pm}
		+\boldsymbol\omega_{\pm}\times Q_{\pm}(X-X_0),
		\label{eq:rigidvel}
	\end{equation}
	with $\mathbf V_{\pm}$ the center-of-mass velocities and
	$\boldsymbol\omega_{\pm}$ the angular velocities. Hence the pointwise
	velocity jump is
	\begin{equation}
		\Delta\mathbf v(X)=\Delta\mathbf V
		+\boldsymbol\omega_{+}\times Q_{+}(X-X_0)
		-\boldsymbol\omega_{-}\times Q_{-}(X-X_0).
		\label{eq:rigidjumpfull}
	\end{equation}
	Only when the orientation is preserved, $Q_{+}=Q_{-}=:Q$, does this
	collapse to the affine form
	\begin{equation}
		\Delta\mathbf v(X)=\Delta\mathbf V
		+\Delta\boldsymbol\omega\times Q(X-X_0),
		\label{eq:rigidjump}
	\end{equation}
	which is the expression used in what follows. The impulse density the
	body must exchange is $\mathbf j(X)=\rho(X)\,\Delta\mathbf v(X)$, with
	$\rho$ the mass density.
	
	\begin{proposition}[Kinematic compatibility of a rigid event]
		\label{prop:rigid}
		Assume the orientation is preserved, $Q_{+}=Q_{-}$, and that
		$\mathcal B$ is not contained in a line through its center of mass.
		Then the pointwise velocity jump vanishes for every material point,
		$\Delta\mathbf v(X)=\mathbf 0$ for all $X\in\mathcal B$, if and only
		if both the translational and the angular velocities are preserved,
		\[
		\Delta\mathbf V=\mathbf 0
		\qquad\text{and}\qquad
		\Delta\boldsymbol\omega=\mathbf 0.
		\]
	\end{proposition}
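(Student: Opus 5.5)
The plan is to work directly from the affine form \eqref{eq:rigidjump}, which is available because the orientation is assumed preserved, $Q_{+}=Q_{-}=Q$. The ``if'' direction is immediate: substituting $\Delta\mathbf V=\mathbf 0$ and $\Delta\boldsymbol\omega=\mathbf 0$ into \eqref{eq:rigidjump} gives $\Delta\mathbf v(X)=\mathbf 0+\mathbf 0\times Q(X-X_0)=\mathbf 0$ for every $X\in\mathcal B$. All the work lies in the converse.

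For the ``only if'' direction, I will take the reference point $X_0$ to be the center of mass, as the decomposition \eqref{eq:rigidconfig} with $\mathbf c_{\pm}$ the center-of-mass positions suggests. If $X_0$ lies in $\mathcal B$, evaluating $\Delta\mathbf v(X_0)=\mathbf 0$ yields $\Delta\mathbf V=\mathbf 0$ directly. If the center of mass is not a material point (as for a ring), I will instead use linearity. The map $X\mapsto\Delta\mathbf V+\Delta\boldsymbol\omega\times Q(X-X_0)$ is affine. Averaging it against the mass density $\rho$ over $\mathcal B$ gives
\[
\mathbf 0=\frac{1}{M}\int_{\mathcal B}\rho\,\Delta\mathbf v\,dX=\Delta\mathbf V+\Delta\boldsymbol\omega\times Q\Bigl(\tfrac{1}{M}\int_{\mathcal B}\rho X\,dX-X_0\Bigr)=\Delta\mathbf V,
\]
because the bracket vanishes by the definition of the center of mass. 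In either case $\Delta\mathbf V=\mathbf 0$, so the hypothesis reduces to $\Delta\boldsymbol\omega\times\mathbf u=\mathbf 0$ for every $\mathbf u$ in the set $U:=\{Q(X-X_0):X\in\mathcal B\}$.

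The key step, and the only real obstacle, is to conclude $\Delta\boldsymbol\omega=\mathbf 0$ from this. The relation $\Delta\boldsymbol\omega\times\mathbf u=\mathbf 0$ means each $\mathbf u\in U$ is parallel to $\Delta\boldsymbol\omega$. Suppose $\Delta\boldsymbol\omega\neq\mathbf 0$. Then every $\mathbf u\in U$ lies in the line $\mathbb R\,\Delta\boldsymbol\omega$. Since $Q$ is invertible, every $X-X_0$ lies in the line $\mathbb R\,Q^{\top}\Delta\boldsymbol\omega$, so $\mathcal B\subset X_0+\mathbb R\,Q^{\top}\Delta\boldsymbol\omega$. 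This is a line through the center of mass, contradicting the hypothesis; hence $\Delta\boldsymbol\omega=\mathbf 0$.

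The main point of care is to make sure the hypothesis is phrased about the center of mass, so that ``$\mathcal B$ is not contained in a line through its center of mass'' is exactly what excludes this degeneracy. I will also remark why the hypothesis is needed: for a rod lying along a line through its center of mass, a nonzero $\Delta\boldsymbol\omega$ parallel to the rod produces $\Delta\mathbf v\equiv\mathbf 0$, so the statement fails without it.
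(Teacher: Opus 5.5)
Your proof is correct and follows the paper's argument. You first obtain $\Delta\mathbf V=\mathbf 0$ at the center of mass $X_0$, then use invertibility of $Q$ and the hypothesis that $\mathcal B$ is not contained in a line through $X_0$ to force $\Delta\boldsymbol\omega=\mathbf 0$; your contradiction argument just makes explicit the paper's ``span not confined to a line'' step. Your mass-averaging step is a small but genuine improvement: the paper's ``evaluate at the center of mass'' silently assumes $X_0\in\mathcal B$, which fails for bodies such as a ring or a hollow shell.
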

	
	\begin{proof}
		If both jumps vanish, \eqref{eq:rigidjump} gives
		$\Delta\mathbf v(X)=\mathbf 0$ for every $X$. Conversely, if
		$\Delta\mathbf v(X)=\mathbf 0$ for all $X$, evaluating at the center
		of mass gives $\Delta\mathbf V=\mathbf 0$; substituting back forces
		$\Delta\boldsymbol\omega\times Q(X-X_0)=\mathbf 0$ for all $X$, and
		since $Q$ is invertible and $\mathcal B$ is not contained in a line
		through the center of mass, the vectors $Q(X-X_0)$ span a set not
		confined to a line, whence $\Delta\boldsymbol\omega=\mathbf 0$.
	\end{proof}
	
	Proposition~\ref{prop:rigid} is a purely \emph{kinematic} statement
	about the pointwise momentum of each material element. It does not, by
	itself, assert the presence or absence of internal stress; that
	requires the continuum balance discussed next.
	
	\subsection{Net impulse, body force and internal stress are distinct}
	\label{subsec:distinct}
	
	It is tempting to conclude that a nonzero $\Delta\mathbf v(X)$ must be
	transmitted through the body by internal forces and therefore produces
	internal stress. This does not follow from kinematics alone. In
	continuum mechanics the local balance of linear momentum reads
	\begin{equation}
		\rho\,\dot{\mathbf v}=\operatorname{div}\boldsymbol\sigma
		+\rho\,\mathbf b,
		\label{eq:cauchy}
	\end{equation}
	where $\boldsymbol\sigma$ is the Cauchy stress tensor and $\mathbf b$
	the body force per unit mass. A prescribed velocity change $\Delta
	\mathbf v(X)$ may be produced by
	\begin{enumerate}[label=(\alph*)]
		\item surface tractions transmitted between regions of the body
		(elastic waves), so that the term $\operatorname{div}
		\boldsymbol\sigma$ dominates; or
		\item a body force $\mathbf b$ acting directly on each material
		element; or
		\item a combination of the two.
	\end{enumerate}
	If the mechanism imparts a spatially \emph{uniform} translational jump,
	$\Delta\mathbf v(X)=\mathbf V_0$, through a uniform body force per unit
	mass, there is no velocity gradient and hence no internal stress: the
	body simply accelerates as a whole, exactly as it does under uniform
	gravity in free fall. Internal stress arises when the momentum is
	delivered non-uniformly---for instance at a surface or an end, whence it
	propagates as a stress wave. Consequently the internal stress is not
	determined by $\Delta\mathbf v$ alone but by the spatial and temporal
	distribution of the loading.
	
	Two consequences follow. First, the safe reading of
	Proposition~\ref{prop:rigid} is that $\Delta\mathbf v(X)=\mathbf 0$ for
	all $X$ guarantees no pointwise change of momentum of any element---a
	necessary condition for a stress-free event, not a computation of
	stress. Second, an angular mismatch is qualitatively worse than a
	translational one: by \eqref{eq:rigidjump} the field
	$\Delta\boldsymbol\omega\times Q(X-X_0)$ is intrinsically
	non-uniform, growing linearly with distance from the center of mass, so
	it cannot be supplied by a uniform body force. Cancelling it would
	require a body force finely tuned to each element---in effect, control
	of the momentum of every material point---whereas any mechanism that
	grips the body over part of its extent delivers the momentum locally and
	generates the stress waves estimated below.
	
	\section{Deformation of Bodies}
	\label{sec:deformation}
	
	Under localized delivery, the impulse density must be transmitted by
	internal forces. This section recalls the elements of linear elasticity
	needed for an order-of-magnitude estimate, and states carefully the
	range in which that estimate is meaningful
	\cite{LandauLifshitzElasticity,Gere2012,TimoshenkoGoodier1970}.
	
	\subsection{Stress, strain and Young's modulus}
	
	For a prismatic bar of length $L_0$ and cross-section $A$ under an axial
	force $F$, the normal stress and strain are
	\begin{equation}
		\sigma=\frac{F}{A},
		\qquad
		\varepsilon=\frac{\Delta L}{L_0},
		\label{eq:stressstrain}
	\end{equation}
	related within the linear elastic regime by Hooke's law
	\begin{equation}
		\sigma=E\,\varepsilon,
		\label{eq:hooke}
	\end{equation}
	where $E$ is Young's modulus (for structural steel $E\approx 200\
	\mathrm{GPa}$). Hooke's law holds only up to the yield stress
	$\sigma_Y$; fracture occurs at the ultimate tensile strength
	$\sigma_U$. Table~\ref{tab:materials} lists representative values.
	
	\begin{table}[htbp]
		\centering
		\begin{tabular}{@{}lccc@{}}
			\toprule
			Material & $E$ (GPa) & $\sigma_Y$ (MPa) & $\sigma_U$ (MPa) \\
			\midrule
			Structural steel (mild) & $200$ & $250$ & $400$ \\
			Stainless steel 304 & $193$ & $215$ & $505$ \\
			Aluminium alloy (6061-T6) & $69$ & $276$ & $310$ \\
			Titanium alloy (Ti-6Al-4V) & $114$ & $880$ & $950$ \\
			\bottomrule
		\end{tabular}
		\caption{Representative room-temperature, quasi-static mechanical
			properties \cite{AmeswebSteel,ASM2000,MatWeb}.}
		\label{tab:materials}
	\end{table}
	
	\subsection{Acoustic estimate and its range of validity}
	
	A one-dimensional momentum-transfer argument gives a first estimate of
	the stress required to communicate a velocity jump $\Delta v=\|\Delta
	\mathbf v\|$ across the body by an elastic wave travelling at the bar
	speed $c=\sqrt{E/\rho}$. The associated acoustic (Hugoniot) stress is
	\begin{equation}
		\sigma_{\mathrm{imp}}=\rho\,c\,\Delta v=\sqrt{\rho E}\;\Delta v,
		\label{eq:acoustic}
	\end{equation}
	the product of the acoustic impedance $\rho c$ and the velocity jump.
	This relation is not a universal law converting any velocity jump into a
	stress; it is the linear response of a specific one-dimensional model
	under localized loading, and it presupposes small strains. Its own
	output reveals the limitation: combining \eqref{eq:hooke},
	\eqref{eq:acoustic} and $E=\rho c^2$ gives the implied strain
	\begin{equation}
		\varepsilon=\frac{\sigma_{\mathrm{imp}}}{E}
		=\frac{\rho c\,\Delta v}{\rho c^{2}}=\frac{\Delta v}{c}.
		\label{eq:strain}
	\end{equation}
	Thus the estimate is self-consistent only while $\Delta v\ll c$.
	
	\paragraph{Same-Earth (rotational) case.}
	For steel, $\rho\approx 7.85\times10^{3}\ \mathrm{kg\,m^{-3}}$,
	$E\approx 200\ \mathrm{GPa}$, so $c\approx 5.05\times10^{3}\
	\mathrm{m\,s^{-1}}$. With the same-Earth bound $\Delta v\approx
	9.30\times10^{2}\ \mathrm{m\,s^{-1}}$ from~\eqref{eq:samebound},
	\[
	\sigma_{\mathrm{imp}}
	\approx (7.85\times10^{3})(5.05\times10^{3})(9.30\times10^{2})
	\approx 3.7\times10^{10}\ \mathrm{Pa}
	=3.7\times10^{4}\ \mathrm{MPa},
	\]
	about $90$ times the ultimate strength of mild steel and roughly two
	orders of magnitude above it. The implied strain is
	$\varepsilon=\Delta v/c\approx 0.18$: large enough to place the response
	well beyond the small-strain, linear-elastic range (into gross
	plasticity and fracture for ordinary metals), yet not physically
	absurd. The estimate should therefore be read as a strong indication
	that any \emph{localized} delivery of this rotational mismatch drives
	common structural metals past failure, while the highest-strength
	materials (e.g.\ titanium alloys, $\sigma_U\approx 950\ \mathrm{MPa}$)
	remain exceeded by a factor of order tens.
	
	\paragraph{Inter-frame case.}
	With the inter-frame bound $\Delta v\approx 6.05\times10^{4}\
	\mathrm{m\,s^{-1}}$ from~\eqref{eq:interbound},
	\[
	\sigma_{\mathrm{imp}}
	\approx 2.4\times10^{12}\ \mathrm{Pa}=2.4\times10^{6}\ \mathrm{MPa},
	\]
	which exceeds the ultimate strength of steel
	($\sigma_U\approx 400\ \mathrm{MPa}$) by a factor of about $6000$, i.e.\
	by $\log_{10}(6000)\approx 3.8$ orders of magnitude---more than three,
	nearly four. Here, however, the implied strain
	$\varepsilon=\Delta v/c\approx 12$ (about $1200\%$) is entirely
	incompatible with linear elasticity, so the figure $2.4\times10^{12}\
	\mathrm{Pa}$ must \emph{not} be presented as a quantitative prediction
	of the real stress. It indicates only that the response would leave the
	linear-elastic regime long before reaching such a state.
	
	\subsection{Interpretation}
	
	These estimates indicate that, under localized delivery of momentum and
	unless the teleportation mechanism enforces the kinematic compatibility
	of Proposition~\ref{prop:rigid} to high precision, the induced stresses
	exceed the quasi-static strengths of the representative materials of
	Table~\ref{tab:materials}. Three caveats accompany this conclusion.
	First, the outcome is conditional on \emph{how} the momentum is
	delivered (Subsection~\ref{subsec:distinct}); a perfectly uniform body
	force would produce no internal stress. Second, a dynamic, extreme-rate
	loading should not be compared naively with quasi-static strengths
	without accounting for strain-rate dependence, multiaxial stress state,
	temperature and dynamic material properties. Third, at the strains
	implied here a faithful treatment requires plasticity, shock formation,
	rate dependence, heating, fracture and possibly phase change. A precise
	elastodynamic and shock analysis, together with numerical simulation of
	the wave field generated by the impulse density, is left for future
	work.
	
	\section{Conclusion}
	\label{sec:conclusion}
	
	We have proposed a classical-mechanical framework for the mechanical
	analysis of instantaneous teleportation. Modeling the trajectory of each
	material point as a $C^2$ curve on a time domain punctured at the
	teleportation instant, and treating the event as exogenous to Newtonian
	dynamics (Remark~\ref{rem:exogenous}), we represented teleportation as a
	jump discontinuity of position at an instant when the body is absent
	from space. The mathematically defensible core is the following: if the
	kinematic states immediately before and after the event carry different
	linear momenta, the mechanism must supply a net linear impulse equal to
	their difference,
	\[
	\mathbf J=\mathbf p_{+}-\mathbf p_{-}=m\,(\mathbf v_{+}-\mathbf v_{-}),
	\]
	carried by the velocity discontinuity; the position jump contributes no
	net linear impulse (Theorem~\ref{thm:impulse},
	Corollary~\ref{cor:compat}). For rigid bodies with preserved
	orientation the compatibility condition splits into the preservation of
	both translational and angular velocity
	(Proposition~\ref{prop:rigid}), understood as a kinematic statement
	rather than a claim about internal stress.
	
	Applied to a point on the Earth, the framework yields a kinematic fact
	that corrects a natural but mistaken estimate: for two points of the
	same Earth at the same instant the orbital velocity cancels, so the
	relevant velocity jump is bounded by
	$2\omega R_{\oplus}\approx 9.3\times10^{2}\ \mathrm{m\,s^{-1}}$, not by
	the orbital scale $6\times10^{4}\ \mathrm{m\,s^{-1}}$, which pertains
	only to inter-epoch or inter-frame teleportation. Even the corrected
	rotational jump, under localized delivery, produces acoustic stresses
	tens to a hundred times the strength of common structural metals; but
	the induced stress depends on how the mechanism transfers momentum to
	the material, and the implied strains lie outside the range of linear
	elasticity, so the numbers are indicative rather than quantitative.
	
	The unavoidable qualitative conclusion, within classical mechanics, is
	that a hypothetical teleportation device would have to enforce kinematic
	compatibility with the destination's state of motion---and distribute
	the residual momentum transfer with great uniformity---to avoid damaging
	the transported body. Several directions remain open: a full
	elastodynamic and shock simulation of the impulsive field, the
	incorporation of orbital eccentricity, barycentric offset and axial
	obliquity, the treatment of deformable rather than rigid bodies, and a
	thermodynamic account of the information lost at the jump, the last
	deliberately left outside the mechanical framework of this paper.


\begin{thebibliography}{99}
		
		\bibitem{Bennett1993}
		C.H. Bennett, G. Brassard, C. Crépeau, R. Jozsa, A. Peres and
		W.K. Wootters, Phys. Rev. Lett. \textbf{70}, 1895 (1993).
		
		\bibitem{Bouwmeester1997}
		D. Bouwmeester, J.-W. Pan, K. Mattle, M. Eibl, H. Weinfurter and
		A. Zeilinger, Nature \textbf{390}, 575 (1997).
		
		\bibitem{NielsenChuang2010}
		M.A. Nielsen and I.L. Chuang, \textit{Quantum Computation and
			Quantum Information} (Cambridge University Press, Cambridge,
		2010), 10th Anniversary ed.
		
		\bibitem{Goldstein2002}
		H. Goldstein, C. Poole and J. Safko, \textit{Classical Mechanics}
		(Addison-Wesley, San Francisco, 2002), 3rd ed.
		
		\bibitem{LandauLifshitz1976}
		L.D. Landau and E.M. Lifshitz, \textit{Mechanics}
		(Butterworth-Heinemann, Oxford, 1976), 3rd ed., v. 1.
		
		\bibitem{LandauLifshitzElasticity}
		L.D. Landau and E.M. Lifshitz, \textit{Theory of Elasticity}
		(Butterworth-Heinemann, Oxford, 1986), 3rd ed., v. 7.
		
		\bibitem{Gere2012}
		J.M. Gere and B.J. Goodno, \textit{Mechanics of Materials}
		(Cengage Learning, Stamford, 2012), 8th ed.
		
		\bibitem{TimoshenkoGoodier1970}
		S.P. Timoshenko and J.N. Goodier, \textit{Theory of Elasticity}
		(McGraw-Hill, New York, 1970), 3rd ed.
		
		\bibitem{WGS84}
		National Imagery and Mapping Agency, \textit{Department of Defense
			World Geodetic System 1984 (WGS 84)}, NIMA TR8350.2 (National
		Imagery and Mapping Agency, Bethesda, 2000), 3rd ed.
		
		\bibitem{IERS2010}
		G. Petit and B. Luzum (eds.), \textit{IERS Conventions (2010)},
		IERS Technical Note No. 36 (Verlag des Bundesamts für
		Kartographie und Geodäsie, Frankfurt am Main, 2010).
		
		\bibitem{IAU2012}
		International Astronomical Union, \textit{Resolution B2 on the
			Re-definition of the Astronomical Unit of Length}, XXVIII General
		Assembly (International Astronomical Union, Beijing, 2012).
		
		\bibitem{NASAEarthFactSheet}
		D.R. Williams, \textit{NASA Earth Fact Sheet} (NASA Goddard Space
		Flight Center, National Space Science Data Center), available at
		\url{https://nssdc.gsfc.nasa.gov/planetary/factsheet/earthfact.html},
		accessed on Aug. 2026.
		
		\bibitem{AmeswebSteel}
		AMESWeb, \textit{Young's Modulus of Steel}, available at
		\url{https://amesweb.info/Materials/Youngs-Modulus-of-Steel.aspx},
		accessed on Aug. 2026.
		
		\bibitem{ASM2000}
		ASM International, \textit{ASM Handbook, Volume 1: Properties and
			Selection: Irons, Steels, and High-Performance Alloys} (ASM
		International, Materials Park, 2000).
		
		\bibitem{MatWeb}
		MatWeb LLC, \textit{Material Property Data}, available at
		\url{https://www.matweb.com}, accessed on Aug. 2026.
		
	\end{thebibliography}
\end{document}